\newtheorem{theorem}{Theorem}[section]
\newtheorem{definition}[theorem]{Definition}
\newtheorem{proposition}[theorem]{Proposition}
\newtheorem{lemma}[theorem]{Lemma}
\begin{document}

\title[Orbitals of the quantum permutation group]{Tracing the orbitals of the quantum permutation group}

\author{J.P. McCarthy}
\address{%
Department of Mathematics\\
Munster Technological University\\
Cork\\
Ireland}

\email{jp.mccarthy@mtu.ie}
\subjclass{46L30, 46L65}
\keywords{quantum permutations, Haar state}
\date{March 4, 2023}
\begin{abstract}
Using a suitably noncommutative flat matrix model, it is shown that the quantum permutation group has free orbitals: that is, a monomial in the generators of the algebra of functions can  be zero for trivial reasons only. It is shown that any strictly intermediate quantum subgroup between the classical and quantum permutation groups must have free three-orbitals. This is used to give explicit formulae for the Haar state on degree four monomials that hold for  such intermediate quantum subgroups as well as the quantum permutation group itself.
\end{abstract}

\maketitle

\section{Introduction \& Preliminaries}
In 1995 Alain Connes asked the question ``What is the quantum automorphism group of a space?'' and in 1998 Wang \cite{wa2} answered the question in the case of finite quantum spaces, i.e. the abstract spectra of finite dimensional $\mathrm{C}^\ast$-algebras. This included notably the quantum automorphism groups of finite classical spaces, which can be viewed equivalently as the quantum automorphism group of $\{1,\dots,N\}$ (that preserves the uniform measure), or as the quantum permutation group on $N$ points. This is all in the language of compact matrix quantum groups \cite{wo1}:
\begin{definition}(Woronowicz)
  If a unital $\mathrm{C}^\ast$-algebra $C(\mathbb{G})$ is:
  \begin{enumerate}
    \item[(i)] generated by the entries of a unitary matrix $u\in M_N(C(\mathbb{G}))$, and
    \item[(ii)] $u$ and $u^t$ are invertible, and
    \item[(iii)] there exists a $*$-homomorphism $\Delta:C(\mathbb{G})\to C(\mathbb{G})\underset{\min}{\otimes}C(\mathbb{G})$, such that
    $$\Delta(u_{ij})= \sum_{k=1}^Nu_{ik}\otimes u_{kj},$$
  \end{enumerate}
  then $\mathbb{G}$ is a compact matrix quantum group with fundamental representation $u\in M_N(C(\mathbb{G}))$.
\end{definition}

 Conventionally, compact matrix quantum groups with noncommutative algebras of functions are spoken about only via their algebra of continuous functions: the quantum group is a so-called \emph{virtual object}.  The algebra of continuous functions $C(S_N^+)$ on the quantum permutation group $S_N^+$ is the universal $\mathrm{C}^\ast$-algebra generated by the entries of an $N\times N$ magic unitary $u\in M_N(C(S_N^+))$, that is a matrix whose rows and columns are partitions of unity, that is they consist of projections, $u_{ij}=u_{ij}^\ast=u_{ij}^2$, that sum to the identity on rows and columns.  Thus, with the unit of $C(S_N^+)$ denoted by $\mathds{1}_{S_N^+}:=1_{C(S_N^+)}$,
$$\sum_{k=1}^Nu_{ik}=\mathds{1}_{S_N^+}=\sum_{k=1}^Nu_{kj}.$$
It can be shown that $S_N^+=S_N$ for $N\leq 3$; however for $N\geq 4$, the quantum permutation group $S_N^+$ is non-classical and infinite in the sense that $C(S_N^+)$ is noncommutative and infinite dimensional \cite{ba1}.

\bigskip

If $\mathbb{G}$ is a compact matrix quantum group with magic fundamental representation $v\in M_N(C(\mathbb{G}))$, the universal property of $C(S_N^+)$ gives $\pi:C(S_N^+)\to C(\mathbb{G})$ a surjective $*$-homomorphism, $u_{ij}\mapsto v_{ij}$, that respects the comultiplication:
$$\Delta_{C(\mathbb{G})}\circ \pi=(\pi\otimes \pi)\circ \Delta_{C(S_N^+)}.$$
That is to say that $\mathbb{G}\subseteq S_N^+$ is a quantum subgroup.  The classical permutation group $S_N$ is a compact matrix quantum group and, where $\mathds{1}_{j\to i}(\sigma):=\delta_{i,\sigma(j)}$, it is a quantum subgroup $S_N\subseteq S_N^+$ via the magic fundamental representation:
$$v=(\mathds{1}_{j\to i})_{i,j=1}^N.$$
 Banica and Bichon \cite{bb3}, through classifying the quantum subgroups $\mathbb{G}\subseteq S_4^+$, noted that $S_4\subset S_4^+$ is a maximal quantum subgroup, and conjectured that $S_N\subseteq S_N^+$ is a  maximal quantum subgroup at all $N$. Only recently did Banica \cite{ban} use advances in subfactor theory to show that $S_5\subset S_5^+$ is also a maximal quantum subgroup. As $S_N^+=S_N$ for $N\leq 3$, the current state of art is:
\begin{theorem}
For $N\leq 5$, the classical permutation group $S_N$ is a maximal quantum subgroup of the quantum permutation group $S_N^+$.
\end{theorem}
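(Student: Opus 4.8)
The plan is to prove the theorem by treating the three ranges $N\leq 3$, $N=4$ and $N=5$ separately, in each case invoking one of the results recalled above. First I would dispose of $N\leq 3$: by Banica \cite{ba1} the $\mathrm{C}^\ast$-algebra $C(S_N^+)$ is commutative when $N\leq 3$, so that $S_N^+=S_N$ as compact matrix quantum groups and there is no quantum group lying strictly between $S_N$ and $S_N^+$ whatsoever. The maximality assertion then holds for the trivial reason that the lattice of intermediate quantum subgroups is a single point; ``maximal'' should here be read as ``admitting no strictly intermediate quantum subgroup'', which is the only sensible reading when $S_N$ and $S_N^+$ coincide.

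For $N=4$ I would appeal to the Banica--Bichon classification \cite{bb3} of all quantum subgroups $\mathbb{G}\subseteq S_4^+$. That classification proceeds from an explicit description of $S_4^+$ --- notably its presentation as a deformation of $SO(3)$ --- together with the Tannaka--Krein / category-of-partitions analysis of its representation theory, and it produces the full finite lattice of quantum subgroups. Reading off that lattice, the only quantum subgroup properly containing $S_4$ is $S_4^+$ itself, which is precisely the statement that $S_4\subset S_4^+$ is maximal.

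For $N=5$ I would quote Banica's recent argument \cite{ban}: a hypothetical chain $S_5\subsetneq\mathbb{G}\subsetneq S_5^+$ is translated into an intermediate object between the planar algebra (subfactor) attached to $S_5$ and the one attached to $S_5^+$, and this is then excluded using the classification of subfactors of small index together with the rigidity input on which \cite{ban} rests, forcing $\mathbb{G}\in\{S_5,S_5^+\}$. Assembling the three cases yields the theorem. The hard part --- and the reason one cannot presently go further --- is exactly this $N=5$ step: the hands-on classification of \cite{bb3} is tractable only for very small $N$, while the subfactor-theoretic substitute used at $N=5$ has no evident extension; proving the analogue for $N\geq 6$, i.e.\ the Banica--Bichon conjecture in general, remains open and is one of the motivations for the orbital estimates developed in the remainder of this paper.
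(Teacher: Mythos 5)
Your proposal is correct and matches the paper's treatment: the theorem is presented there as a summary of known results, assembled exactly as you do --- $S_N^+=S_N$ for $N\leq 3$ by commutativity of $C(S_N^+)$ \cite{ba1}, the Banica--Bichon classification of quantum subgroups of $S_4^+$ for $N=4$ \cite{bb3}, and Banica's subfactor-theoretic maximality result for $N=5$ \cite{ban}. No new argument is given in the paper beyond these citations, so your case-by-case appeal to the same three sources is essentially the paper's own proof.
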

This work humbly posits the existence of an \emph{exotic} intermediate quantum permutation group:
$$S_N\subsetneq \mathbb{G}\subsetneq S_N^+,$$
and studies some of its very basic  algebraic properties. Using the abelianisation $\pi_{\text{ab}}:C(\mathbb{G})\to C(S_N)$, each $\sigma\in S_N$ gives a character on (universal) $C(\mathbb{G})$:
$$\operatorname{ev}_{\sigma}(f)=\pi_{\text{ab}}(f)(\sigma).$$
These characters can be used to permute labels in the following sense:
\begin{proposition}\label{permute}
Suppose $S_N\subseteq \mathbb{G}\subseteq S_N^+$. Then for all states $\varphi$ on $C(\mathbb{G})$ and $\sigma,\tau\in S_N$:
$$(\operatorname{ev}_{\sigma^{-1}}\star \varphi\star \operatorname{ev}_\tau)(u_{i_1j_1}\cdots u_{i_nj_n})=\varphi(u_{\sigma(i_1)\tau(j_1)}\cdots u_{\sigma(i_n)\tau(j_n)}).$$
\end{proposition}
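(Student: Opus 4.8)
The plan is to unwind the definition of the convolution product $\star$ on states (functionals) of $C(\mathbb{G})$, namely $(\alpha \star \beta)(f) = (\alpha \otimes \beta)(\Delta(f))$, iterated for the triple product $\operatorname{ev}_{\sigma^{-1}} \star \varphi \star \operatorname{ev}_\tau$. Applying this to $\Delta^{(2)}(u_{ij}) = \sum_{k,\ell} u_{ik} \otimes u_{k\ell} \otimes u_{\ell j}$ (the iterated comultiplication, which follows from coassociativity and the defining formula for $\Delta$ on the generators), the triple product evaluated on a single generator $u_{ij}$ becomes $\sum_{k,\ell} \operatorname{ev}_{\sigma^{-1}}(u_{ik})\,\varphi(u_{k\ell})\,\operatorname{ev}_\tau(u_{\ell j})$.

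Next I would compute the character values. Since $\operatorname{ev}_\rho(u_{ab}) = \pi_{\mathrm{ab}}(u_{ab})(\rho) = v_{ab}(\rho) = \mathds{1}_{b\to a}(\rho) = \delta_{a,\rho(b)}$, we get $\operatorname{ev}_{\sigma^{-1}}(u_{ik}) = \delta_{i,\sigma^{-1}(k)} = \delta_{\sigma(i),k}$ and $\operatorname{ev}_\tau(u_{\ell j}) = \delta_{\ell, \tau(j)}$. These Kronecker deltas collapse the double sum over $k,\ell$, leaving exactly $\varphi(u_{\sigma(i)\,\tau(j)})$. This establishes the claim for $n=1$.

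For the general monomial $u_{i_1 j_1} \cdots u_{i_n j_n}$, I would apply $\Delta^{(2)}$, which is a $*$-homomorphism, so $\Delta^{(2)}(u_{i_1 j_1} \cdots u_{i_n j_n}) = \prod_{r=1}^n \left( \sum_{k_r, \ell_r} u_{i_r k_r} \otimes u_{k_r \ell_r} \otimes u_{\ell_r j_r} \right)$. Evaluating against $\operatorname{ev}_{\sigma^{-1}} \otimes \varphi \otimes \operatorname{ev}_\tau$ and using multiplicativity of the characters $\operatorname{ev}_{\sigma^{-1}}$ and $\operatorname{ev}_\tau$ on the first and third legs, each factor contributes $\delta_{\sigma(i_r), k_r}\,\delta_{\ell_r, \tau(j_r)}$ to the scalar coefficient; summing over all $k_r, \ell_r$ again collapses everything, and the middle leg assembles into $\varphi(u_{\sigma(i_1)\tau(j_1)} \cdots u_{\sigma(i_n)\tau(j_n)})$.

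I do not anticipate a serious obstacle here — the statement is essentially a bookkeeping exercise. The one point requiring a little care is the interchange of the (finite) sums with the tensor-leg evaluations and the verification that one may factor the evaluation of a product of the "block" sums as a product of evaluations; this is legitimate precisely because $\operatorname{ev}_{\sigma^{-1}}$ and $\operatorname{ev}_\tau$ are characters (multiplicative), whereas $\varphi$ need not be, which is why the middle leg must be kept intact as a single monomial rather than split. I would also remark that the identity extends by linearity and continuity from monomials to all of $C(\mathbb{G})$, though only the monomial case is needed in the sequel.
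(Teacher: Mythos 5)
Your proof is correct, and it is the standard convolution--comultiplication bookkeeping that the paper itself does not spell out but simply attributes to (Prop.\ 6.4, \cite{mcc}) ``with the same proof''; your computation of $\operatorname{ev}_{\sigma^{-1}}(u_{ik})=\delta_{\sigma(i),k}$, $\operatorname{ev}_\tau(u_{\ell j})=\delta_{\ell,\tau(j)}$, and the collapse of $(\operatorname{ev}_{\sigma^{-1}}\otimes\varphi\otimes\operatorname{ev}_\tau)\circ\Delta^{(2)}$ on monomials, keeping the middle leg intact because only the outer functionals are multiplicative, is exactly the intended argument.
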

\begin{proof}
This is a slight generalisation of (Prop. 6.4, \cite{mcc}), albeit with the same proof.
\end{proof}

\section{Free orbitals}
The orbitals of a quantum permutation group $\mathbb{G}\subseteq S_N^+$ are related to non-zero monomials:
$$u_{i_1j_1}\cdots u_{i_mj_m}\neq 0.$$
The spectre of one-orbitals, or orbits, is present in the work of Bichon (Prop. 4.1,\cite{bi3}). However the first explicit mention of $u_{ij}\neq 0$ relating to orbits is in the PhD thesis of Huang  \cite{hu0}.  However it was Lupini, Man\v{c}inska, \& Roberson \cite{lmr} who first defined two-orbitals:
\begin{definition}
Let $\mathbb{G}\subseteq S_N^+$. Define a relation $\sim_m$ on $\{1,2,\dots,N\}^m$ by
$$(i_1,\dots,i_m)\sim_m(j_1,\dots,j_m)\iff u_{i_1j_1}\cdots u_{i_mj_m}\neq 0.$$
\end{definition}
The $m$-orbital relation is reflexive and symmetric. Both $\sim_1$ and $\sim_2$ are equivalence relations, their equivalence classes called orbits and two-orbitals \cite{lmr}. Higher orbitals are not in general equivalence relations \cite{mcc}.

\bigskip

Let $u\in M_N(\mathcal{A})$ be a magic unitary with entries in a $\mathrm{C}^*$-algebra. As the rows and columns of $u$ are partitions of unity, entries along rows or columns are orthogonal:
$$u_{ik}u_{il}=\delta_{k,l}\,u_{ik}\text{ and }u_{kj}u_{lj}=\delta_{k,l}\,u_{kj}.$$
If $u_{i_1j_1}\cdots u_{i_mj_m}=0$ because two consecutive terms are along a common row or column, say that  the monomial $u_{i_1j_1}\cdots u_{i_mj_m}$ is zero \emph{for trivial reasons}.
\begin{definition}
A quantum permutation group $\mathbb{G}\subseteq S_N^+$ has \emph{free $m$-orbitals} if
$$u_{i_1j_1}\cdots u_{i_mj_m}=0$$
for trivial reasons only. A quantum permutation group has \emph{free} orbitals if it has free $m$-orbitals for all $m\geq 1$.
\end{definition}
$S_N\subsetneq S_N^+$ has free orbits and free two-orbitals but it does not have free three-orbitals, e.g.
$$\mathds{1}_{1\to 3}\mathds{1}_{2\to 2}\mathds{1}_{1\to 1}=0.$$

By the universal property of $C(S_N^+)$, there is a $*$-homomorphism from $C(S_N^+)$ to any $\mathrm{C}^*$-algebra $\mathcal{A}$ that admits an $N\times N$ magic unitary. To show that $S_N^+$ has free orbitals, it suffices to produce a magic unitary such that monomials with entries from $v\in M_N(\mathcal{A})$ are zero for trivial reasons only. This is because, where $\pi:C(S_N^+)\to \mathcal{A}$ is given by the universal property,
\begin{equation}\pi(u_{i_1j_1}u_{i_2j_2}\cdots u_{i_mj_m})=v_{i_1j_1}v_{i_2j_2}\cdots v_{i_mj_m}.\label{pio}\end{equation}

 Studied by Banica and Nechita \cite{bni}, a flat matrix model $v\in M_N(M_N(\mathbb{C}))$ is a magic unitary of rank one projections. Such a model can be given by a \emph{magic basis}, a matrix $\xi\in M_N(\mathbb{C}^N)$ such that each row and column of $\xi$ forms an orthonormal basis of $\mathbb{C}^N$:
$$v_{ij}=|\xi_{ij}\rangle \langle \xi_{ij}|.$$
Monomials from such magic unitaries are very easy to handle:
$$v_{i_1j_1}v_{i_2j_2}\cdots v_{i_mj_m}=|\xi_{i_1j_1}\rangle \langle \xi_{i_1j_1}|\xi_{i_2j_2}\rangle \langle \xi_{i_2j_2}|\cdots |\xi_{i_mj_m}\rangle \langle \xi_{i_mj_m}|,$$
and equal to zero only when some $\langle \xi_{i_nj_n}|\xi_{i_{n+1}j_{n+1}}\rangle=0$. Also, the commutativity of the projections is determined by inner products:
$$[v_{ij},v_{kl}]\neq 0\iff 0<|\langle \xi_{ij}|\xi_{kl} \rangle|<1,$$
and putting these facts together, if there exists a suitably noncommutative  flat matrix model $v\in M_N(M_N(\mathbb{C}))$ of $C(S_N^+)$, one in which:
$$[v_{ij},v_{kl}]=0\iff i=k\text{ or }j=l,$$
then $v$ has free orbitals in the sense that:
$$v_{i_1j_1}v_{i_2j_2}\cdots v_{i_mj_m}=0$$
for trivial reasons only, and thus by (\ref{pio}) so does $S_N^+$.
\begin{theorem}\label{4fo}
$S_N^+$ has free orbitals for $N\geq 4$.
\end{theorem}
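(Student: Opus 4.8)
The plan is to use the reduction made just before the statement: by the universal property of $C(S_N^+)$ and identity~(\ref{pio}), it suffices to exhibit, for every $N\geq 4$, a single \emph{non-degenerate} magic basis $\xi\in M_N(\mathbb{C}^N)$, meaning one for which $0<|\langle\xi_{ij}\,|\,\xi_{kl}\rangle|<1$ whenever $i\neq k$ and $j\neq l$. Given such a $\xi$, the associated flat model $v_{ij}=|\xi_{ij}\rangle\langle\xi_{ij}|$ satisfies $[v_{ij},v_{kl}]=0\iff i=k\text{ or }j=l$ and has no monomial that vanishes for a non-trivial reason; pulling back along the $*$-homomorphism $C(S_N^+)\to M_N(\mathbb{C})$ then yields free orbitals for $S_N^+$.

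To build $\xi$ I would encode a magic basis as an $N$-tuple of unitaries $(U_1,\dots,U_N)$ with $\xi_{ij}=U_ie_j$: the row condition is then automatic, the column condition becomes $\operatorname{diag}(U_i^\ast U_k)=0$ for $i\neq k$, and $\langle\xi_{ij}\,|\,\xi_{kl}\rangle=(U_i^\ast U_k)_{jl}$; replacing each $U_i$ by $U_1^\ast U_i$ one may assume $U_1=\ind$. Starting from the tuple of permutation matrices furnished by a group law on $\{1,\dots,N\}$ (the Klein four-group when $N=4$, and $\mathbb{Z}_N$ in general) --- which meets every constraint but is totally degenerate, every inner product lying in $\{0,1\}$ --- I would deform the $U_i$ within the solution set of the diagonal conditions and check that each of the finitely many ``bad'' loci, where some $(U_i^\ast U_k)_{jl}$ with $i\neq k$ and $j\neq l$ is $0$ or is unimodular, is a proper real-algebraic subset of the deformation space, so that a non-degenerate $\xi$ survives. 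A convenient concrete model for this deformation is the flat model of a complex Hadamard matrix $H$, namely $\xi_{ij}=N^{-1/2}\,\overline{H_i}\odot H_j$ (the entrywise product of the $i$-th and $j$-th rows, with the first conjugated), for which $\langle\xi_{ij}\,|\,\xi_{kl}\rangle=N^{-1}\langle H_j\odot H_k\,,\,H_i\odot H_l\rangle$; taking $H$ from a non-Fourier family --- for $N=4$, the one-parameter deformation of $F_4$ --- makes all off-row/off-column inner products depend non-trivially on the parameter, and a single generic member of the family, excluding only finitely many parameter values, is non-degenerate.

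I expect the difficulty to lie in two places. First, one must genuinely produce, for each off-row/off-column pair $((i,j),(k,l))$, at least one magic basis whose $((i,j),(k,l))$-inner product lies strictly in $(0,1)$: without this the corresponding bad locus is not proper and the genericity argument collapses. The action of $S_N\times S_N$ on magic bases (the label-permuting phenomenon of Proposition~\ref{permute}, restricted to the classical group) reduces this to a handful of representative pairs, but it still has to be verified by hand. Second, the choices must be made \emph{simultaneously}: the space of all $N\times N$ magic bases need not be irreducible, so separate good examples for separate pairs need not combine into one example good for all pairs. This is exactly why it is cleanest to carry out the whole verification inside one explicit connected family of magic bases --- such as a deformed-Hadamard family --- where simultaneity is automatic and the argument reduces to a finite computation. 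Should no single family work for all $N$ at once (for example $N=5$, where the $5\times5$ complex Hadamard matrix is essentially unique and of Fourier type, so that the Hadamard route degenerates), the fallback is to dispose of the small exceptional values of $N$ by writing down an ad hoc non-degenerate magic basis not arising from a Hadamard matrix, and to run the uniform argument for all remaining $N$.
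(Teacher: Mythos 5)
Your reduction is exactly the paper's: by the universal property and~(\ref{pio}) it suffices to produce, for each $N\geq 4$, one magic basis $\xi$ with $0<|\langle\xi_{ij},\xi_{kl}\rangle|<1$ whenever $i\neq k$ and $j\neq l$. Where you diverge is in how such a $\xi$ is produced, and this is where the proposal stops short of a proof. Your genericity argument rests on the claim that, inside some connected family of magic bases, each of the finitely many ``bad'' loci is a \emph{proper} subvariety; as you yourself concede, establishing properness amounts to exhibiting, for each off-row/off-column pair, a member of the family whose corresponding inner product lies strictly in $(0,1)$ --- and this verification is never carried out, for any value of $N$. Worse, the concrete carrier you propose (flat models of deformed complex Hadamard matrices) is unavailable precisely where it is most needed: for $N=5$ the Fourier matrix is, by Haagerup's classification, the unique $5\times 5$ complex Hadamard matrix up to equivalence and is isolated, so there is no deformation family at all; the same isolation phenomenon affects $F_p$ for prime $p$, and the classification of complex Hadamard matrices is open for $N\geq 6$, so ``take a generic member of a non-Fourier family'' cannot be invoked uniformly in $N$. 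The promised ad hoc bases for the exceptional values are not supplied. The skeleton is sound, but the load-bearing construction is missing.

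For comparison, the paper sidesteps genericity entirely by writing down one explicit magic basis per $N$: for $N=4$ an $SU(2)$-fibre of the Pauli representation, and for each $N\geq 5$ a two-coordinate modification of the Fourier magic basis, namely $\langle e_p,\xi_{ij}\rangle=\omega^{p(i-j)}/\sqrt{N}$ for $2\leq p\leq N-1$, with the $p=1$ and $p=N$ entries replaced by $\omega^{1-j}/\sqrt{N}$ and $\omega^{i-1}/\sqrt{N}$. All inner products are then computed in closed form, giving either $1+\tfrac1N(2\Re(\omega^{k-i})-2)\in[1-\tfrac4N,1)$ or $\tfrac1N(\omega^{j-l}-1)(1-\omega^{k-i})$ of modulus in $(0,\tfrac4N]$, both manifestly in the required open range once $N>4$. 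This is close in spirit to your idea of perturbing the degenerate Fourier model, but the perturbation is explicit and the verification is a single short computation valid for every $N\geq 5$ simultaneously. To salvage your approach you would need either to replace the genericity step by such an explicit perturbation, or at minimum to carry out the properness check inside the $F_4^q$ family for $N=4$ and to actually produce the ad hoc basis for $N=5$ (and for every other $N$ where no deformation family is known to exist).
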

\begin{proof}
After a change of basis, the
$$\begin{pmatrix}
     \frac{1}{\sqrt{3}}e^{\pi i/4}  & -\sqrt{\frac{2}{3}}e^{-\pi i/2} \\
      \sqrt{\frac{2}{3}}e^{\pi i/2} & \frac{1}{\sqrt{3}}e^{-\pi i/4}
    \end{pmatrix}\in SU(2)$$ fibre of the Pauli representation  $C(S_4^+)\to C(SU(2),M_4(\mathbb{C}))$ \cite{bac} yields a suitably noncommutative flat matrix model for $C(S_4^+)$, given by the magic basis:
    $$\xi=\frac{1}{3}\begin{bmatrix}
                       3e_1 & e_2-2e_3-2e_4 & e_4-2e_2-2e_3 & e_3-2e_2-2e_4 \\
                       3e_2 & e_1-2e_3+2e_4 & e_3-2e_1+2e_4 & e_4+2e_1+2e_3 \\
                       3e_3 & e_4-2e_1+2e_2 & e_2+2e_1+2e_4 & e_1+2e_2-2e_4 \\
                       3e_4 & e_3+2e_1+2e_2 & e_1-2e_2+2e_3 & e_2-2e_1+2e_3
                     \end{bmatrix}.$$
Now for each $N\geq 5$, let $\omega=\exp(2\pi i/N)$. Where $e_1,e_2,\dots, e_N$ are the standard basis vectors of $\mathbb{C}^N$, for $1\leq i,j\leq N$, define a vector $\xi_{ij}\in \mathbb{C}^N$ according to:
$$
\langle e_p,\xi_{ij}\rangle=\begin{cases}
  \dfrac{1}{\sqrt{N}}\omega^{1-j}, & \mbox{if } p=1, \\[2ex]
  \dfrac{1}{\sqrt{N}}\omega^{i-1}, & \mbox{if } p=N, \\[2ex]
  \dfrac{1}{\sqrt{N}}\omega^{p(i-j)}, & \mbox{otherwise}.
\end{cases}
$$
Note that $\xi_{ij}$ is a unit vector, and
\begin{align*}
\langle \xi_{ij},\xi_{kl}\rangle&=\frac{1}{N}\omega^{j-1}\cdot \omega^{1-l}+\frac{1}{N}\omega^{1-i}\cdot \omega^{k-1}+\frac{1}{N}\sum_{p=2}^{N-1}\omega^{-p(i-j)}\cdot \omega^{p(k-l)}
\\&=\frac{1}{N}\omega^{j-l}+\frac{1}{N}\omega^{k-i}-\frac{1}{N}-\frac{1}{N}\omega^{(k-i)+(j-l)}+\frac{1}{N}\sum_{p=0}^{N-1}\omega^{p((k-i)+(j-l))}
\\&=\frac{1}{N}(\omega^{j-l}-1)(1-\omega^{k-i})+\frac{1}{N}\sum_{p=0}^{N-1}\left[\omega^{(k-i)+(j-l)}\right]^p.
\\&=\begin{cases}
                                      1, & \mbox{if } i=k,j=l \\
                                      0, & \mbox{if }i=k,j\neq l\text{, or }i\neq k,j=l.
                                    \end{cases}\end{align*}
Therefore $\xi$ is a magic basis. Suppose now that $i\neq k$, $j\neq l$. Consider two cases:
\newline \textbf{Case 1:} If $(k-i)+(j-l)\equiv 0\mod N$ the sum is 1:
$$\langle \xi_{ij},\xi_{kl}\rangle=1+\frac{1}{N}(\omega^{j-l}-1)(1-\omega^{k-i}).$$
However $j-l\equiv i-k \mod N$ giving:
$$\langle \xi_{ij},\xi_{kl}\rangle=1+\frac{1}{N}(2\Re(\omega^{k-i})-2)\implies 1-\frac{4}{N}\leq \langle \xi_{ij},\xi_{kl}\rangle<1,$$
as $i\neq k$. Note that because $N>4$, this is non-zero.
\newline \textbf{Case 2:} If $(k-i)+(j-1)\not\equiv 0\mod N$ the sum is zero:
\begin{align*}
\langle \xi_{ij},\xi_{kl}\rangle&=\frac{1}{N}(\omega^{j-l}-1)(1-\omega^{k-i})
\\ \implies |\langle \xi_{ij},\xi_{kl}\rangle|&=\frac{1}{N}|\omega^{j-l}-1||1-\omega^{k-i}|\leq \frac{4}{N}<1.
\end{align*}
Neither is $\langle \xi_{ij},\xi_{kl}\rangle=0$ because $i\neq k$, $l\neq j$. Therefore $\xi$ is a suitably noncommutative magic basis, exhibiting free orbitals for $C(S_N^+)$ in the range $N\geq 5$.
\end{proof}
For a fixed $N\geq 4$, the magic basis given above gives a matrix model $\pi:C(S_N^+)\to M_N(\mathbb{C})$. Is it \emph{inner faithful}? There is a largest quantum permutation group, the Hopf image \cite{bb4}, $\mathbb{G}_{\pi}\subseteq S_N^+$ which factorises through the quotient:
$$C(S_N^+)\to C(\mathbb{G}_{\pi})\to M_N(\mathbb{C}),$$
say $\pi=\rho\circ q$. If $q$ is an isomorphism, then $\mathbb{G}_{\pi}=S_N^+$, and the model is said to be \emph{inner faithful}. Where $\operatorname{tr}$ is the normalised trace on $M_N(\mathbb{C})$, Wang \cite{wa1} showed that:
$$h_{C(\mathbb{G}_{\pi})}=w^*\text{-}\lim_{n\to \infty}\frac{1}{n}\sum_{k=1}^n (\operatorname{tr}\circ q)^{\star k}.$$
The inner faithfulness of a Theorem \ref{4fo} model would complement the Connes embedding related results of \cite{BCF}. Any attempt to show that the matrix model in Theorem \ref{4fo} is inner faithful might use the following:
\begin{theorem}\label{PWT}
Where $u\in M_N(C(S_N^+))$ is the fundamental representation of $S_N^+$, the moments of the main character $\operatorname{fix}=\sum_i u_{ii}$ are given by the Catalan numbers:
$$h(\operatorname{fix}^k)=C_k.$$
Furthermore, if $\mathbb{G}\subseteq S_N^+$ via $v\in M_N(C(\mathbb{G}))$, with main character $\operatorname{fix}_{\mathbb{G}}=\sum_i v_{ii}$, then the moments are whole numbers, and
$$h_{C(\mathbb{G})}(\operatorname{fix}_{\mathbb{G}}^k)=C_k\implies \mathbb{G}=S_N^+.$$
\end{theorem}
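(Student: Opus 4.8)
The plan is to realise the moments of $\operatorname{fix}$ as dimensions of fixed-point spaces and then invoke the Tannakian rigidity of $S_N^+$ --- this is the Peter--Weyl--Tannaka route, and the first assertion is in essence due to Banica. Throughout take $N\geq 4$, as in the rest of the paper.

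\emph{The Catalan moments.} Since $u$ is a magic unitary, its entries are self-adjoint, so $\operatorname{fix}=\sum_i u_{ii}$ is self-adjoint and $\operatorname{fix}^k=\sum_{i_1,\dots,i_k}u_{i_1i_1}\cdots u_{i_ki_k}=\chi_{u^{\otimes k}}$ is the character of the $k$-fold tensor power of the fundamental representation. By the Woronowicz--Peter--Weyl orthogonality relations, $h(\chi_\rho)$ is the multiplicity of the trivial representation in $\rho$, so $h(\operatorname{fix}^k)=\dim\operatorname{Fix}(u^{\otimes k})$, the dimension of the space of $u^{\otimes k}$-invariant vectors in $(\mathbb{C}^N)^{\otimes k}$. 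By Banica's diagrammatic description of the intertwiners of $S_N^+$, this space is spanned by the partition vectors $\xi_p=\sum_{\ker i\geq p}e_{i_1}\otimes\cdots\otimes e_{i_k}$ indexed by $NC(k)$, the noncrossing partitions of $\{1,\dots,k\}$; and for $N\geq 4$ these vectors are linearly independent, equivalently the Gram matrix $\big(N^{|p\vee q|}\big)_{p,q\in NC(k)}$ is invertible. Hence $h(\operatorname{fix}^k)=|NC(k)|=C_k$. (Equivalently, $\operatorname{fix}$ follows the free Poisson, or Marchenko--Pastur, law of parameter $1$.)

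\emph{The rigidity.} Let $\mathbb{G}\subseteq S_N^+$ via the magic unitary $v\in M_N(C(\mathbb{G}))$, with surjection $\pi\colon C(S_N^+)\to C(\mathbb{G})$, $u_{ij}\mapsto v_{ij}$, respecting the comultiplications; then $\pi(\operatorname{fix})=\operatorname{fix}_{\mathbb{G}}$. Exactly as above, $h_{C(\mathbb{G})}(\operatorname{fix}_{\mathbb{G}}^k)=\dim\operatorname{Fix}(v^{\otimes k})$, a non-negative integer --- that is the whole-numbers claim. Applying $\pi$ entrywise to $u^{\otimes k}$ yields $v^{\otimes k}$, so any vector, and more generally any operator, intertwining powers of $u$ also intertwines the corresponding powers of $v$; in particular $\operatorname{Fix}(u^{\otimes k})\subseteq\operatorname{Fix}(v^{\otimes k})$, which already gives $h_{C(\mathbb{G})}(\operatorname{fix}_{\mathbb{G}}^k)\geq C_k$, with equality exactly when the two fixed-point spaces agree. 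Suppose now $h_{C(\mathbb{G})}(\operatorname{fix}_{\mathbb{G}}^k)=C_k$ for every $k$, so that $\operatorname{Fix}(v^{\otimes k})=\operatorname{Fix}(u^{\otimes k})$ for all $k$. A magic unitary is self-conjugate via the duality morphism $1\mapsto\sum_i e_i\otimes e_i$, which is an intertwiner for $u$ and hence, applying $\pi$, for $v$; Frobenius reciprocity through this morphism identifies $\operatorname{Hom}(v^{\otimes k},v^{\otimes l})$ with $\operatorname{Fix}(v^{\otimes(k+l)})$, and, crucially, by maps that are the same for $\mathbb{G}$ as for $S_N^+$. Therefore $\operatorname{Hom}_{\mathbb{G}}(v^{\otimes k},v^{\otimes l})=\operatorname{Hom}_{S_N^+}(u^{\otimes k},u^{\otimes l})$ for all $k,l\geq 0$, and since by Woronowicz's Tannaka--Krein duality a compact matrix quantum group is recovered from this monoidal category of intertwiners, $\pi$ is an isomorphism and $\mathbb{G}=S_N^+$.

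\emph{The main obstacle.} Granting Banica's intertwiner calculus, everything else is formal: integrality and the lower bound $h_{C(\mathbb{G})}(\operatorname{fix}_{\mathbb{G}}^k)\geq C_k$ are immediate, and the converse is a routine Tannaka--Krein rigidity step. The one genuinely substantive input is the equality $\dim\operatorname{Fix}_{S_N^+}(u^{\otimes k})=C_k$, i.e.\ the linear independence of the noncrossing partition vectors in $(\mathbb{C}^N)^{\otimes k}$ for $N\geq 4$; for a self-contained treatment one proves this by induction on $k$, using the recursive block structure of $NC(k)$ to evaluate, or at least bound away from zero, the Gram determinant $\det\big(N^{|p\vee q|}\big)_{p,q\in NC(k)}$.
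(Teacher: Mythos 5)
Your argument is correct and is precisely the Peter--Weyl/Tannaka--Krein route that the paper itself invokes: the paper gives no proof beyond citing Banica's book (``the proof relies upon Peter--Weyl theory''), and your sketch --- moments as dimensions of fixed-point spaces, linear independence of the noncrossing partition vectors for $N\geq 4$, the inclusion $\operatorname{Fix}(u^{\otimes k})\subseteq\operatorname{Fix}(v^{\otimes k})$, and Frobenius reciprocity plus Tannakian rigidity for the converse --- is exactly the content of the cited results. No discrepancy to report.
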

This is a well known result, whose proof relies upon Peter--Weyl theory (see, for example, (Th. 9.10, Th. 3.21, Prop. 4.3, \cite{ba1}). The notation $\operatorname{fix}$ is used because the classical analogue $\sum_{i}\mathds{1}_{i\to i}$ counts the number of fixed points of a permutation.

\section{The orbitals of exotic quantum permutation groups}
In this section it is shown that if an exotic $S_N\subsetneq \mathbb{G}\subseteq S_N^+$ exists, then it has free three-orbitals.

\begin{proposition}\label{orbit}
  Exotic quantum permutation groups $S_N\subsetneq \mathbb{G}\subsetneq S_N^+$ have free orbits and free two-orbitals.
\end{proposition}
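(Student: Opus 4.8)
The plan is to use only the inclusion $S_N\subseteq\mathbb{G}$; the hypothesis $\mathbb{G}\subsetneq S_N^+$ plays no role, and the argument in fact shows that \emph{any} quantum permutation group containing $S_N$ has free orbits and free two-orbitals. Write $v\in M_N(C(\mathbb{G}))$ for the fundamental magic unitary. By the discussion preceding Proposition~\ref{permute}, the abelianisation $\pi_{\text{ab}}\colon C(\mathbb{G})\to C(S_N)$ sends $v_{ij}\mapsto\mathds{1}_{j\to i}$, and $\operatorname{ev}_\sigma=\pi_{\text{ab}}(\,\cdot\,)(\sigma)$ for $\sigma\in S_N$. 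Since each $\operatorname{ev}_\sigma$ is linear, a monomial is non-zero as soon as some $\operatorname{ev}_\sigma$ fails to annihilate it; and
$$\operatorname{ev}_\sigma(v_{i_1j_1}\cdots v_{i_mj_m})=\prod_{n=1}^{m}\delta_{i_n,\,\sigma(j_n)},$$
which equals $1$ exactly when $\sigma(j_n)=i_n$ for every $n$. So it suffices, for each monomial that is \emph{not} zero for trivial reasons, to exhibit a $\sigma\in S_N$ realising the assignment $j_n\mapsto i_n$.

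Free orbits (free one-orbitals) is then immediate: a single $v_{ij}$ is never zero for trivial reasons, and any $\sigma\in S_N$ with $\sigma(j)=i$ gives $\operatorname{ev}_\sigma(v_{ij})=1$, so $v_{ij}\neq0$. For free two-orbitals I would run the short case analysis on $v_{ij}v_{kl}$: if $(i,j)=(k,l)$ the monomial is the projection $v_{ij}\neq0$; if $i=k$ with $j\neq l$, or $j=l$ with $i\neq k$, it is zero for trivial reasons; and in the only remaining case, $i\neq k$ and $j\neq l$, the assignment $j\mapsto i$, $l\mapsto k$ is an injection of $\{j,l\}$ into $\{1,\dots,N\}$, which extends to a permutation $\sigma$ (bijecting $\{1,\dots,N\}\setminus\{j,l\}$ onto $\{1,\dots,N\}\setminus\{i,k\}$ arbitrarily), whence $\operatorname{ev}_\sigma(v_{ij}v_{kl})=1$ and $v_{ij}v_{kl}\neq0$. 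This exhausts all cases.

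I do not anticipate a genuine obstacle here: the whole content is the bookkeeping of trivial-reason zeros above, together with the elementary fact that a partial bijection of a finite set extends to a permutation. What is worth flagging is precisely \emph{where} the method stops, which also motivates what follows. Already at $m=3$ there are monomials that are not zero for trivial reasons yet are killed by every $\operatorname{ev}_\sigma$: for instance $v_{3,1}\,v_{2,2}\,v_{1,1}$ would force $\sigma(1)=3$ and $\sigma(1)=1$ simultaneously, so $\pi_{\text{ab}}$ sends it to $0$ and cannot witness any non-vanishing. Hence free three-orbitals for an exotic $\mathbb{G}$ is a genuinely non-classical statement, and is taken up separately.
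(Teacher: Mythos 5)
Your proposal is correct and is exactly the paper's argument: the paper's proof is the one-line statement that free orbits and free two-orbitals are ``inherited from $S_N$ via the abelianisation,'' which is precisely the mechanism you spell out (push the monomial through $\pi_{\text{ab}}$ and evaluate at a permutation realising the partial bijection $j_n\mapsto i_n$). Your closing remark about why the method necessarily fails at three-orbitals also matches the paper's example $\mathds{1}_{1\to 3}\mathds{1}_{2\to 2}\mathds{1}_{1\to 1}=0$.
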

\begin{proof}
These are inherited from $S_N$ via the abelianisation.
\end{proof}
\begin{proposition}\label{orbital}
Consider exotic $S_N\subsetneq \mathbb{G}\subsetneq S_N^+$. Entries from $u\in M_N(C(\mathbb{G}))$ pairwise-commute only when they are from the same row or column.
\end{proposition}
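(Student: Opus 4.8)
The claim is that for exotic $S_N \subsetneq \mathbb{G} \subsetneq S_N^+$, we have $[u_{ij},u_{kl}] = 0$ if and only if $i=k$ or $j=l$. One direction is automatic: if $i=k$ or $j=l$, orthogonality of partitions of unity gives $u_{ij}u_{kl} = u_{kl}u_{ij}$ (both sides are $0$ unless the indices coincide entirely, in which case $u_{ij}^2 = u_{ij}$). So the content is the converse: if $i \neq k$ and $j \neq l$, then $[u_{ij},u_{kl}] \neq 0$.

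**The strategy: use maximality of $S_N$ together with the relation generated by commuting entries.**

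The plan is to argue by contradiction. Suppose some $[u_{ij},u_{kl}] = 0$ with $i\neq k$, $j \neq l$. The idea is that forcing one ``generic'' pair of entries to commute should, after applying the label-permuting automorphisms of Proposition~\ref{permute} and the comultiplication, force enough commutation relations that $C(\mathbb{G})$ collapses to something lying between $C(S_N^+)$ and $C(S_N)$ but strictly larger than $C(S_N)$ --- contradicting that $S_N \subsetneq \mathbb{G}$ is already assumed, or rather, exhibiting that $\mathbb{G}$ must then equal $S_N$. More precisely: I would consider the $*$-homomorphism image and show that the two-sided ideal (or the quotient) generated by the single relation $[u_{ij},u_{kl}]=0$ is compatible with the comultiplication, so it defines a quantum subgroup $\mathbb{H}$ with $\mathbb{G} \subseteq \mathbb{H}$... no — wait, imposing a relation makes the quotient \emph{smaller}, so it gives $\mathbb{H} \subseteq \mathbb{G}$ with $\mathbb{H}$ still containing $S_N$. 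The real point must be the other way: I should show that if this commutation holds in $C(\mathbb{G})$, then by applying $\mathrm{ev}_{\sigma^{-1}} \star (\cdot) \star \mathrm{ev}_\tau$-type manipulations (Proposition~\ref{permute}) one deduces $[u_{i'j'},u_{k'l'}] = 0$ for \emph{all} pairs with $i'\neq k'$, $j'\neq l'$, i.e. \emph{all} off-row/off-column pairs commute; combined with the trivial commutations this makes $C(\mathbb{G})$ commutative, hence $\mathbb{G} = S_N$, contradiction.

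**Carrying it out.**

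First I would record the trivial direction as above. Then, assuming $[u_{ij},u_{kl}]=0$ with $i\neq k,\,j\neq l$: apply Proposition~\ref{permute} with suitable $\sigma,\tau \in S_N$ to transport this to $[u_{\sigma(i)\tau(j)},u_{\sigma(k)\tau(l)}]=0$. Since $S_N$ acts $2$-transitively enough --- given any $(i',j',k',l')$ with $i'\neq k'$ and $j'\neq l'$, there is $\sigma$ with $\sigma(i)=i', \sigma(k)=k'$ and $\tau$ with $\tau(j)=j',\tau(l)=l'$ --- every off-diagonal pair of entries commutes. Here one must be slightly careful that Proposition~\ref{permute} is stated for states evaluated on monomials, not directly for commutators of generators; so I would phrase it as: for every state $\varphi$ on $C(\mathbb{G})$, $\varphi(u_{i'j'}u_{k'l'} - u_{k'l'}u_{i'j'}) = (\mathrm{ev}_{\sigma^{-1}}\star\varphi\star\mathrm{ev}_\tau)(u_{ij}u_{kl} - u_{kl}u_{ij}) = 0$, and since states separate points of a $\mathrm{C}^*$-algebra, the commutator itself vanishes. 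Together with $[u_{i'j'},u_{k'l'}]=0$ whenever $i'=k'$ or $j'=l'$, all generators commute, so $C(\mathbb{G})$ is commutative; being a quotient of $C(S_N^+)$ it is then a quotient of $C(S_N)=C(S_N^+)_{\mathrm{ab}}$, whence $\mathbb{G}$ is a subgroup of $S_N$, and since $S_N\subseteq\mathbb{G}$ we get $\mathbb{G}=S_N$, contradicting that $\mathbb{G}$ is exotic.

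**Anticipated obstacle.**

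The main subtlety is the passage from ``the commutator vanishes in every state'' to ``the commutator is $0$'', and more basically confirming that Proposition~\ref{permute} legitimately applies: it is stated for monomials $u_{i_1j_1}\cdots u_{i_nj_n}$, and a commutator is a difference of two length-two monomials, so linearity handles it, but I should make sure the convolution $\mathrm{ev}_{\sigma^{-1}}\star\varphi\star\mathrm{ev}_\tau$ is again a state (it is, being a composition of the comultiplication with a tensor product of states) so that ranging over all $\varphi$ on the left is the same as ranging over all states on the right. A second point to check is that the abelianisation of $C(S_N^+)$ is exactly $C(S_N)$ --- this is standard (the magic unitary becomes a classical permutation matrix) and can be cited. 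Modulo these checks, the argument is short; the only genuinely structural input is the $2$-transitivity-style freedom in choosing $\sigma,\tau$, which is elementary.
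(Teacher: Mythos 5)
Your proposal is correct and is essentially the paper's argument run in contrapositive form: the paper starts from noncommutativity of $C(\mathbb{G})$ to obtain one non-commuting pair and uses Proposition \ref{permute} (plus separation by states) to transport non-commutation to every pair in distinct rows and columns, whereas you transport a hypothetical commutation to all such pairs and conclude $C(\mathbb{G})$ would be commutative, forcing $\mathbb{G}=S_N$. The only cosmetic difference is that you apply Proposition \ref{permute} linearly to the commutator, while the paper sidesteps linearity by comparing the monomials $|u_{ab}u_{cd}|^2$ and $|u_{cd}u_{ab}|^2$; both steps are sound.
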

\begin{proof}
By assumption $C(\mathbb{G})$ is noncommutative and therefore there exists a non-commuting pair (necessarily $a\neq c$, $b\neq d$):
$$u_{ab}u_{cd}\neq u_{cd}u_{ab}.$$
If $p,q$ are projections in a $\mathrm{C}^*$-algebra then $pq=qp$ if and only if $|pq|^2=|qp|^2$, therefore:

$$|u_{ab}u_{cd}|^2\neq |u_{cd}u_{ab}|^2.$$
The states on a $\mathrm{C}^*$-algebra are separating, and therefore there exists $\varphi_0$ on $C(\mathbb{G})$ such that:
$$\varphi_0(|u_{cd}u_{ab}|^2)\neq \varphi_0(|u_{ab}u_{cd}|^2).$$
Let $\sigma,\,\tau\in S_N$ be such that:
$$\sigma(i)=c,\,\sigma(k)=a\text{ and }\tau(j)=d,\,\tau(l)=b.$$
With  $\varphi:=\operatorname{ev}_{\sigma^{-1}}\star \varphi_0 \star \operatorname{ev}_{\tau},$ using Proposition \ref{permute}:
$$\varphi(|u_{ij}u_{kl}|^2)=\varphi_0(|u_{cd}u_{ab}|^2)\neq \varphi_0(|u_{ab}u_{cd}|^2) =\varphi(|u_{kj}u_{ij}|^2).$$
Therefore
$$|u_{ij}u_{kl}|^2\neq |u_{kl}u_{ij}|^2\implies  u_{ij}u_{kl}\neq u_{kl}u_{ij}\qquad\qedhere$$
\end{proof}

\begin{theorem}\label{efto}
 Exotic quantum permutation groups $S_N\subsetneq \mathbb{G}\subsetneq S_N^+$ have free three-orbitals.
\end{theorem}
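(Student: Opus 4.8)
The plan is to identify exactly which non-trivially-zero three-orbitals are not already handled by the abelianisation $\pi_{\text{ab}}\colon C(\mathbb{G})\to C(S_N)$, and then to kill those few with a rigidity argument fed by Proposition~\ref{orbital}. First I would observe that $\pi_{\text{ab}}(u_{i_1j_1}u_{i_2j_2}u_{i_3j_3})=\mathds{1}_{j_1\to i_1}\mathds{1}_{j_2\to i_2}\mathds{1}_{j_3\to i_3}$ is a function on $S_N$ which is nonzero exactly when the partial assignment $j_k\mapsto i_k$ extends to a permutation, i.e. when $j_k=j_l\iff i_k=i_l$ for all $k,l$ (the extension being possible since $N\geq 4$). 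If the monomial is not trivially zero, then for the two consecutive pairs $(k,l)=(1,2),(2,3)$ this equivalence already holds — its failure there is precisely a trivial-zero configuration — so the only way $\pi_{\text{ab}}$ can annihilate the monomial is a clash in the pair $(1,3)$. Hence a non-trivially-zero three-orbital has nonzero $\pi_{\text{ab}}$-image, and so is itself nonzero, \emph{unless} it is of one of the two mutually transposed shapes
$$u_{ax}u_{cd}u_{az}\quad(a\neq c;\ x,d,z\ \text{pairwise distinct})\qquad\text{or}\qquad u_{ix}u_{cd}u_{kx}\quad(i,c,k\ \text{pairwise distinct};\ x\neq d).$$
Everything therefore comes down to showing that no monomial of either shape vanishes.

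For the first shape I would argue by contradiction. If some $u_{a_0x_0}u_{c_0d_0}u_{a_0z_0}=0$, then choosing $\sigma\in S_N$ with $\sigma(a_0)=a$, $\sigma(c_0)=c$ and $\tau\in S_N$ with $\tau(x_0)=x$, $\tau(d_0)=d$, $\tau(z_0)=z$, and feeding an arbitrary state into Proposition~\ref{permute}, \emph{every} monomial of that shape vanishes. Now fix a generator $u_{cd}$ and a row $a\neq c$, and put $B_x:=u_{cd}u_{ax}u_{cd}$ for $x=1,\dots,N$. Each $B_x$ is positive; since the rows of $u$ partition the unit, $\sum_x B_x=u_{cd}$; and for $x\neq z$ the central factor of $B_xB_z=u_{cd}\bigl(u_{ax}u_{cd}u_{az}\bigr)u_{cd}$ vanishes — it is of the banned shape, or trivially zero when $x=d$ or $z=d$ — so $B_xB_z=0$. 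Pairwise-orthogonal positive elements summing to a projection are projections: from $0\leq B_x\leq u_{cd}$ one gets $B_x=u_{cd}B_x=\sum_z B_zB_x=B_x^2$. But for projections $p,q$ the identity $pq(1-p)\cdot(1-p)qp=pqp-(pqp)^2$ shows that $pqp$ idempotent forces $pq(1-p)=0$, hence $pq=pqp$, hence — taking adjoints — $pq=qp$. So $[u_{cd},u_{ax}]=0$ for every $a\neq c$ and every $x$; picking $x\neq d$ contradicts Proposition~\ref{orbital}.

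The second shape is symmetric: either transpose the magic unitary, noting that $u^t$ is the magic fundamental representation of the exotic quantum permutation group $(C(\mathbb{G}),\Delta^{\mathrm{op}})$ and that transposition interchanges the two shapes, or simply rerun the previous argument with $B_i:=u_{cd}u_{ix}u_{cd}$ for a fixed column $x\neq d$, using that the columns of $u$ partition the unit and that the monomials $u_{ix}u_{cd}u_{kx}$ vanish; one again obtains $[u_{cd},u_{ix}]=0$ for all $i$, contradicting Proposition~\ref{orbital}. Thus no non-trivially-zero three-orbital is zero, which is the claim. The step I expect to be the genuine obstacle is the middle one: recognising that the hypothetical vanishing of a whole shape upgrades each $u_{cd}u_{ax}u_{cd}$ to a projection, together with the small but decisive fact that $u_{cd}u_{ax}u_{cd}$ is a projection exactly when $u_{cd}$ and $u_{ax}$ commute; the remaining work — the first-step case analysis, and manufacturing the permutations for Proposition~\ref{permute} — is routine bookkeeping.
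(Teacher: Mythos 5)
Your proposal is correct, and its core argument is genuinely different from the paper's. The skeleton is shared: both reduce to the two $(0,0,1)$-shapes $u_{ab}u_{cd}u_{af}$ and $u_{ab}u_{cd}u_{eb}$ (the paper passes between them with the antipode, you with the transpose or a symmetric rerun), and both ultimately lean on Proposition~\ref{orbital} together with the label-permuting characters of Proposition~\ref{permute}. Where you diverge is the heart of the proof. The paper argues pointwise and spatially: given the non-commuting pair $u_{cd}, u_{eb}$, it passes to the GNS representation, picks a vector in the generic part of the two-projection decomposition (citing B\"ottcher--Spitkovsky), builds a vector state $\varphi_0$ and the conditioned state $\varphi(f)=\varphi_0(u_{cd}fu_{cd})/\varphi_0(u_{cd})$, shows $0<\varphi(u_{eb})<1$, and uses $\sum_i\varphi(u_{ib})=1$ to find \emph{some} $a$ with $\varphi_0(|u_{ab}u_{cd}u_{eb}|^2)>0$ (the passage from one witness to all monomials of the shape being left to the implicit relabelling argument). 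You instead argue globally and algebraically: the relabelling step is invoked up front to upgrade one vanishing monomial to the vanishing of the whole shape, after which the elements $B_x=u_{cd}u_{ax}u_{cd}$ become pairwise orthogonal positive elements summing to the projection $u_{cd}$, hence projections; the identity $u_{cd}u_{ax}(1-u_{cd})\cdot(1-u_{cd})u_{ax}u_{cd}=B_x-B_x^2$ then forces $[u_{cd},u_{ax}]=0$ for $x\neq d$, $a\neq c$, contradicting Proposition~\ref{orbital}. Your route avoids the GNS representation and two-projection geometry entirely, and it delivers all monomials of a given shape in one stroke rather than one witness index; the paper's route stays within its state-space methodology and produces an explicit state witnessing positivity of $h$-integrable quantities, which is in the spirit of the Haar-state estimates that follow. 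All the individual steps you flag as delicate (the orthogonality $B_xB_z=0$, positivity plus idempotence giving projections, and projection-ness of $pqp$ forcing $pq=qp$) check out.
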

\begin{proof}
Let $1\leq a,b,c,d,e,f\leq N$. Where $u\in M_N(C(\mathbb{G}))$, it is required to show
$$\delta_{ac}+\delta_{bd},\delta_{ce}+\delta_{df}\in\{0,2\}\implies u_{ab}u_{cd}u_{ef}\neq 0.$$
To capture all the possibilities, consider:
$$(r,s,t)=(\delta_{ac}+\delta_{bd},\delta_{ce}+\delta_{df},\delta_{ae}+\delta_{bf}).$$
The only non-trivial case is $(r,s,t)=(0,0,1)$. In this case, where different `$i$' and `$j$' symbols are distinct, it is $u_{ab}u_{cd}u_{af}$ or $u_{ab}u_{cd}u_{eb}$. The first case can be reduced to the second with the use of the antipode. So consider $u_{ab}u_{cd}u_{eb}\in C(\mathbb{G})$. With $(r,s)=(0,0)$, $u_{cd}u_{eb}\neq 0$. By Proposition \ref{orbital}
$$u_{cd}u_{eb}\neq u_{eb} u_{cd}.$$
 Represent $C(\mathbb{G})$ using the universal GNS representation $\pi_{\text{GNS}}(C(\mathbb{G}))\subset B(\mathsf{H})$. Denote
$$p:=\pi_{\text{GNS}}(u_{eb})\text{ and }q:=\pi_{\text{GNS}}(u_{cd}).$$
As $pq\neq qp$,  there exists $x\in \operatorname{ran}p$ orthogonal to both\footnote{in the notation of (\cite{bos},(1)), $x\in M_0$} $\operatorname{ran}p\cap\operatorname{ran}q$ and $\operatorname{ran} p\cap \ker q$. Define a state on $C(\mathbb{G})$:
$$\varphi_0(f)=\langle x,\pi_{\text{GNS}}(f)x\rangle.$$
Consider:
\begin{align}
  \varphi_0(u_{eb}fu_{eb})  &=\langle x,\pi_{\text{GNS}}(u_{eb}fu_{eb})x\rangle
    =\langle x,p\pi_{\text{GNS}}(f)px\rangle\nonumber\\
  &=\langle px,\pi_{\text{GNS}}(f)px\rangle
    =\langle x,\pi_{\text{GNS}}(f)x\rangle=\varphi_0(f)\label{note},
\end{align}
as $x\in\operatorname{ran}p$. Furthermore, together with $x\in\operatorname{ran}p$
\begin{align*}
 \varphi_0(u_{cd})= \langle x,qx\rangle =1 & \implies x\in\operatorname{ran}q \\
  \varphi_0(u_{cd})=\langle x,qx\rangle =0 & \implies x\in\ker q
\end{align*}
but $x$ is orthogonal to both $\operatorname{ran}p\cap\operatorname{ran}q$ and $\operatorname{ran} q\cap \ker q$ thus
$$0<\langle x,qx\rangle<1\implies 0<\varphi_0(u_{cd})<1.$$
Now define a state
$$\varphi(f):=\frac{\varphi_0(u_{cd}f u_{cd})}{\varphi_0(u_{cd})}=\frac{\langle qx,\pi_{\text{GNS}}(f)qx \rangle}{\langle qx,qx\rangle}.$$
In particular
\begin{align*}
  \varphi(u_{eb}) & =\frac{\langle qx,pqx\rangle}{\langle qx,qx\rangle}
\end{align*}
Together with $qx\in\operatorname{ran}q$:
\begin{align*}
  \varphi(u_{eb})=1 & \implies qx\in\operatorname{ran}p \\
  \varphi(u_{eb})=0 & \implies qx\in\operatorname{ker}p
\end{align*}
But $qx$ is orthogonal to $\operatorname{ran}p\cap \operatorname{ran}q$, and $\operatorname{ker}p\cap \operatorname{ran} q$ and it follows that:
$$0<\varphi(u_{eb})<1.$$
However
$$\varphi(u_{eb})+\varphi\left(\sum_{i\neq e}u_{ib}\right)=\varphi\left(\sum_iu_{ib}\right)=\varphi(\mathds{1}_{S_N^+})=1,$$
therefore there exists $u_{ab}\neq u_{eb}$ such that:
\begin{align*}
\varphi(u_{ab})&>0
\\ \implies \frac{\varphi_0(u_{cd}u_{ab}u_{cd})}{\varphi_0(u_{cd})}&>0
\\ \underset{(\ref{note})}{\implies} \frac{\varphi_0(u_{eb}u_{cd}u_{ab}u_{cd}u_{eb})}{\varphi_0(u_{cd})}&>0
\\ \implies \varphi_0(|u_{ab}u_{cd}u_{eb}|^2)&>0
\\ \implies u_{ab}u_{cd}u_{eb}&\neq 0\qedhere
\end{align*}
\end{proof}
This is the published version of the proof. It also follows from the following elementary lemma:
\begin{lemma}
  Let $\{p_k\}_{k=1}^n\subset \mathcal{A}$ be a partition of unity in a $\mathrm{C}^*$-algebra, and $q\in \mathcal{A}$ a projection such that $[p_i,q]\neq 0$. Then there exists $p_j\neq p_i$ such that $p_jqp_j\neq 0$.
\end{lemma}
\begin{proof}
Assume that for all $j\neq i$ the $p_jqp_i$ are zero. Add them up to get $(1-p_i)qp_i=0$. This yields $qp_i=p_iqp_i$ which implies, taking adjoints, that $p_i$ commutes with $q$. Therefore one of the $p_jqp_i$ must be zero.
\end{proof}
Is there a quantum permutation group $\mathbb{G}\subsetneq S_N^+$ with free three-orbitals? Note that if a Theorem \ref{4fo} model $\pi:C(S_N^+)\to M_N(\mathbb{C})$ is \emph{not} inner faithful, then the Hopf image $\mathbb{G}_\pi\subsetneq S_N^+$ has free orbitals, and in particular free three-orbitals.
\section{The Haar state}
The value of the Haar state at monomials is ostensibly  important in the theory of quantum permutation groups. The values of the Haar state on degree three monomials in $C(S_N^+)$,
$$h(u_{i_1j_1}u_{i_2j_2}u_{i_3j_3}),$$
are well known, but their calculation typically uses representation theory, usually via a study of the fixed point spaces of tensor powers of the fundamental representation  \cite{ba2}. However, using Proposition \ref{permute}, these can be calculated using elementary considerations. Furthermore, while nothing is known about their representation theory, these calculations also hold for exotic quantum permutation groups. The same elementary considerations can be used to derive relations between degree four monomials in the exotic case:
$$h(u_{i_1j_1}u_{i_2j_2}u_{i_3j_3}u_{i_4j_4}).$$
Representation theory, via Theorem \ref{PWT}, can be used to give another relation from which explicit formulae follow.
\begin{proposition}\label{Haar}
The Haar state on $C(S_N^+)$, and exotic $C(\mathbb{G})$, is tracial, invariant under the antipode:
    \begin{equation}h(u_{i_1j_1}\cdots u_{i_nj_n})=h(u_{j_ni_n}\cdots u_{j_1i_1}),\label{antip}\end{equation}
    and invariant under permutations of the labels, for $\sigma,\tau\in S_N$:
    \begin{equation}h(u_{i_1j_1}\cdots u_{i_nj_n})=h(u_{\sigma(i_1),\tau(j_1)}\cdots u_{\sigma(i_n),\tau(j_n)}).\label{relab}\end{equation}
\end{proposition}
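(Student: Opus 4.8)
The plan is to prove the three invariance properties of $h$ from the two defining properties of the Haar state: its bi-invariance, $(\mathrm{id}\otimes h)\circ\Delta = h(\cdot)\mathds{1} = (h\otimes\mathrm{id})\circ\Delta$, together with its uniqueness as the unique bi-invariant state. These hold for $C(\mathbb{G})$ for any compact matrix quantum group $\mathbb{G}$, hence in particular for $S_N^+$ and any exotic $\mathbb{G}$.

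\textbf{Traciality.} First I would recall the standard fact (Woronowicz) that on a compact matrix quantum group of \emph{Kac type} the Haar state is a trace, and that $S_N^+$ — being generated by an \emph{orthogonal} magic unitary, $u = \bar u$ and $u$ unitary — is of Kac type, with the antipode $S$ satisfying $S(u_{ij}) = u_{ji}$ and $S^2 = \mathrm{id}$. Since $\mathbb{G}\subseteq S_N^+$ inherits a surjection $C(S_N^+)\to C(\mathbb{G})$ intertwining the comultiplications and sending $u_{ij}\mapsto v_{ij}$, the fundamental representation of $\mathbb{G}$ is again an orthogonal magic unitary, so $\mathbb{G}$ is also of Kac type and $h_{C(\mathbb{G})}$ is tracial. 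Alternatively, one can cite the explicit computation that the modular automorphism group of $h$ is trivial because all representations are unitarizable with respect to the canonical inner product (the Frobenius--Schur/Kac condition). This is the step I expect to lean on external theory for; it is genuinely the crux, but it is entirely standard.

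\textbf{Antipode invariance (\ref{antip}).} Given traciality and $S(u_{ij}) = u_{ji}$, the formula $h\circ S = h$ — valid on any Kac-type quantum group since $h$ is a trace and $S$ is then a $*$-preserving anti-automorphism with $S^2=\mathrm{id}$, and $h\circ S$ is again a bi-invariant state hence equals $h$ by uniqueness — yields
$$h(u_{i_1j_1}\cdots u_{i_nj_n}) = h\bigl(S(u_{i_1j_1}\cdots u_{i_nj_n})\bigr) = h\bigl(S(u_{i_nj_n})\cdots S(u_{i_1j_1})\bigr) = h(u_{j_ni_n}\cdots u_{j_1i_1}),$$
using that $S$ reverses products.

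\textbf{Label invariance (\ref{relab}).} This follows from Proposition \ref{permute} applied with $\varphi = h$, once I observe that $h$ is \emph{idempotent} for the convolution product in the sense $\mathrm{ev}_{\sigma^{-1}}\star h\star \mathrm{ev}_\tau = h$ for all $\sigma,\tau\in S_N$. Indeed, $\mathrm{ev}_\tau\star h = h$ because $h$ is left-invariant: for any character $\chi$, $\chi\star h = (\chi\otimes h)\circ\Delta = \chi(\cdot)$ evaluated against... more precisely $(\chi\star h)(a) = (\chi\otimes h)\Delta(a) = \chi\bigl((\mathrm{id}\otimes h)\Delta(a)\bigr) = \chi(h(a)\mathds{1}) = h(a)$; symmetrically $h\star\mathrm{ev}_{\sigma^{-1}} = h$ by right-invariance, and associativity of $\star$ gives $\mathrm{ev}_{\sigma^{-1}}\star h\star\mathrm{ev}_\tau = h$. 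Plugging $\varphi = h$ into Proposition \ref{permute} then gives exactly
$$h(u_{i_1j_1}\cdots u_{i_nj_n}) = (\mathrm{ev}_{\sigma^{-1}}\star h\star\mathrm{ev}_\tau)(u_{i_1j_1}\cdots u_{i_nj_n}) = h(u_{\sigma(i_1)\tau(j_1)}\cdots u_{\sigma(i_n)\tau(j_n)}),$$
which is (\ref{relab}). The main obstacle, as noted, is the traciality/Kac-type input; everything else is a short deduction from invariance plus Proposition \ref{permute}.
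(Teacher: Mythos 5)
Your proposal is correct and follows essentially the same route as the paper: traciality and antipode invariance are taken as standard (you supply the Kac-type justification the paper only cites), and label invariance comes from Proposition \ref{permute} applied with $\operatorname{ev}_{\sigma^{-1}}\star h\star\operatorname{ev}_\tau=h$, which is exactly the paper's argument. The extra detail you give (bi-invariance implying the convolution identity, and $h\circ S=h$ with $S(u_{ij})=u_{ji}$) merely fleshes out what the paper leaves implicit.
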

\begin{proof}
That the Haar state is tracial, and invariant under the antipode is standard. Apply Proposition \ref{permute} with $h=\operatorname{ev}_{\sigma^{-1}}\star h\star \operatorname{ev}_{\tau}$ for invariance under permutations of the labels.
\end{proof}
Where $u$ is a magic unitary,  a monomial $f=u_{i_1j_1}\cdots u_{i_mj_m}$  is in reduced form if it is zero, or if for all $1\leq n\leq m-1$:
$$\delta_{i_n,i_{n+1}}+\delta_{j_n,j_{n+1}}=2,$$
that is  use the relation $u_{ij}^2=u_{ij}$ and the orthogonality along rows and columns of $u$ to ensure that $f$ is of minimal degree. In the below, all monomials are assumed reduced.

\begin{proposition}
For both $S_N^+$ and exotic $\mathbb{G}\subsetneq S_N^+$
\begin{align*}
h(u_{ij})&=\frac{1}{N},
\\ h(u_{ij}u_{kl})&=\frac{1}{N(N-1)},
\end{align*}
and, if $|\{i_1,i_2,i_3\}|=|\{j_1,j_2,j_3\}|=3$:
  $$h(u_{i_1j_1}u_{i_2j_2}u_{i_3j_3})=\frac{1}{N(N-1)(N-2)}.$$

\end{proposition}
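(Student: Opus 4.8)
The plan is to read all three values off from label-invariance \eqref{relab} and traciality of the Haar state (Proposition \ref{Haar}), together with the single algebraic relation $\sum_k u_{kl}=\mathds{1}_{S_N^+}$, working upward in degree so that each formula feeds into the next. Crucially, nothing beyond what Proposition \ref{Haar} supplies is used, so the same computation applies verbatim to $S_N^+$ and to exotic $\mathbb{G}\subsetneq S_N^+$.

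First, \eqref{relab} shows $h(u_{ij})$ does not depend on $i,j$; applying $h$ to $\sum_k u_{kj}=\mathds{1}_{S_N^+}$ gives $N\,h(u_{ij})=1$, so $h(u_{ij})=1/N$. Next, for a reduced $u_{ij}u_{kl}$ — so $i\neq k$ and $j\neq l$ — label-invariance shows the Haar value depends only on this distinctness pattern; call it $c_2$. Fix $i,j$ and any $l\neq j$: from $u_{ij}=u_{ij}\sum_k u_{kl}$, with the $k=i$ term equal to $u_{ij}u_{il}=\delta_{jl}u_{ij}=0$, one gets $u_{ij}=\sum_{k\neq i}u_{ij}u_{kl}$; applying $h$ yields $1/N=(N-1)c_2$, hence $c_2=\tfrac1{N(N-1)}$.

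For the degree-three monomial with $i_1,i_2,i_3$ pairwise distinct and $j_1,j_2,j_3$ pairwise distinct, \eqref{relab} again collapses the value to one constant $c_3$. Choose $j_3\notin\{j_1,j_2\}$ and expand
\[
u_{i_1j_1}u_{i_2j_2}=u_{i_1j_1}u_{i_2j_2}\sum_k u_{kj_3}=\sum_k u_{i_1j_1}u_{i_2j_2}u_{kj_3}.
\]
The $k=i_2$ term is $0$ because $u_{i_2j_2}u_{i_2j_3}=0$. For the $k=i_1$ term, traciality cycles $h(u_{i_1j_1}u_{i_2j_2}u_{i_1j_3})$ into $h(u_{i_1j_3}u_{i_1j_1}u_{i_2j_2})$, which vanishes since $u_{i_1j_3}u_{i_1j_1}=0$ (as $j_3\neq j_1$). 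The remaining $N-2$ terms each have rows $i_1,i_2,k$ pairwise distinct and columns $j_1,j_2,j_3$ pairwise distinct, hence Haar value $c_3$, so $\tfrac1{N(N-1)}=h(u_{i_1j_1}u_{i_2j_2})=(N-2)c_3$, giving the stated value.

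The only step that is not purely mechanical is the handling of the $k=i_1$ term: one must resist computing it and instead use traciality to place two same-row projections adjacent, forcing the term to zero — this is what makes the induction close without input from representation theory. One should also make explicit, when invoking \eqref{relab}, that the $S_N\times S_N$-orbit of each label pattern is as large as claimed, which is immediate because any injection between subsets of $\{1,\dots,N\}$ extends to a permutation.
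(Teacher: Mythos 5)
Your proof is correct and follows essentially the same route as the paper's: label-invariance from Proposition \ref{Haar} collapses each case to a single constant, and expanding against $\sum_k u_{kl}=\mathds{1}$ (resp.\ $\sum_k u_{kj_3}=\mathds{1}$) with traciality killing the one non-trivially-vanishing term yields the recursion $1=Nc_1$, $c_1=(N-1)c_2$, $c_2=(N-2)c_3$. The only (immaterial) difference is that you sum over the row index of the appended factor where the paper sums over the column index — the two are exchanged by the antipode invariance.
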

\begin{proof}
By Proposition \ref{Haar},  for any $1\leq j,k\leq N$, $h(u_{ij})=h(u_{ik})$. Therefore
\begin{align*}
h(\mathds{1}_{\mathbb{G}})=h\left(\sum_{k=1}^Nu_{ik}\right)=\sum_{k=1}^Nh( u_{ik})=1\implies  Nh(u_{ij})=1.
\end{align*}
For the second equation,
\begin{align*}
  \frac{1}{N}  =h(u_{ij})=h\left(u_{ij}\left(\sum_{p=1}^Nu_{pl}\right)\right) =\sum_{p\neq i}h(u_{ij}u_{pl})=(N-1)h(u_{ij}u_{kl}).
\end{align*}
 Note that, by traciality $h(u_{i_1j_1}u_{i_2j_2}u_{i_1j_1})=h(u_{i_1j_1}u_{i_2j_2})$, and, if $j_3\neq j_1$, $h(u_{i_1j_1}u_{i_2j_2}u_{i_1j_3})=0$. Consider
\begin{align*}
  \frac{1}{N(N-1)} & =h(u_{i_1j_1}u_{i_2j_2})=h\left(u_{i_1j_1}u_{i_2j_2}\sum_{p=1}^Nu_{i_3p}\right) \\
   & =\sum_{p\neq i_1,i_2}h(u_{i_1j_1}u_{i_2j_2}u_{i_3p})=\sum_{p\neq i_1,i_2}h(u_{i_1j_1}u_{i_2j_2}u_{i_3j_3})
   \\ \implies h(u_{i_1j_1}u_{i_2j_2}u_{i_3j_3})&=\frac{1}{N(N-1)(N-2)}\qedhere
\end{align*}

\end{proof}
Proposition \ref{Haar} sets out some invariances of the Haar state.  For example, using (\ref{relab}) with $\sigma=(325)(47)$ and $\tau=(2184)(36)$:
$$h(u_{12}u_{34}u_{56}u_{78})=h(u_{11}u_{22}u_{33}u_{44}).$$
By using invariance under the antipode (\ref{antip}), and traciality:
\begin{align*}
  h(u_{11}u_{22}u_{11}u_{23}) & = h(u_{32}u_{11}u_{22}u_{11}) =h(u_{11}u_{22}u_{11}u_{32}).
\end{align*}
By using traciality, and (\ref{relab}) with $\sigma=(12)$ and $\tau=(132)$:
$$h(u_{11}u_{22}u_{13}u_{22})=h(u_{22}u_{13}u_{22}u_{11})=h(u_{11}u_{22}u_{11}u_{23}).$$
Using brute force and these various invariances, excluding cases that reduce via traciality (e.g. $h(u_{11}u_{22}u_{33}u_{11})=h(u_{11}u_{22}u_{33})$), it can be seen there are only seven basic integrals of degree four monomials (the \emph{integral} of $f$ here   refers to $h(f)$).

\begin{theorem}
Where $q(N)=N(N-1)(N^2-3N+1)$, the following formulae for the Haar states of both exotic $\mathbb{G}$ and $S_N^+$ hold:
\begin{align*}
  h(u_{11}u_{22}u_{11}u_{22})&=\frac{2N-5}{q(N)} \\
 h(u_{11}u_{22}u_{11}u_{23})&=\frac{N-3}{q(N)} \\
  h(u_{11}u_{22}u_{11}u_{33})&=\frac{N-2}{q(N)}\\
 h(u_{11}u_{22}u_{13}u_{24})&=-\frac{1}{q(N)} \\
h(u_{11}u_{22}u_{13}u_{32})&=-\frac{N-3}{(N-2)q(N)} \\
h(u_{11}u_{22}u_{13}u_{34})&=\frac{1}{(N-2)q(N)} \\
h(u_{11}u_{22}u_{33}u_{44})&=\frac{N}{(N-2)q(N)}
\end{align*}
\end{theorem}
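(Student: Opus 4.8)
The plan is to pin down the one free parameter left by Theorem~\ref{len4} using the only representation-theoretic input available, namely the value $h(\operatorname{fix}^4)=C_4=14$ from Theorem~\ref{PWT}. Theorem~\ref{len4} already expresses all seven basic degree-four integrals $\alpha_1,\dots,\alpha_7$ as affine functions of $\alpha_4=h(u_{11}u_{22}u_{13}u_{24})$, so it suffices to squeeze out one further linear relation among the $\alpha_i$ and solve for $\alpha_4$. (The lower moments $h(\operatorname{fix}^k)=C_k$ for $k\le 3$ could be used as consistency checks but carry no new information.)

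To produce that relation I would expand $\operatorname{fix}^4=\sum_{i,j,k,l=1}^N u_{ii}u_{jj}u_{kk}u_{ll}$ and apply the Haar state termwise. By the relabelling invariance~(\ref{relab}) with $\sigma=\tau$, together with $u_{mm}^2=u_{mm}$ and traciality, each summand $h(u_{ii}u_{jj}u_{kk}u_{ll})$ depends only on the set partition of $\{1,2,3,4\}$ recording the coincidences among $i,j,k,l$, so I would group the $N^4$ terms according to the fifteen such partitions. The crucial bookkeeping point is that a reduced \emph{diagonal} monomial of degree four can only take three shapes: $u_{aa}u_{bb}u_{aa}u_{bb}$, giving $\alpha_1$; $u_{aa}u_{bb}u_{aa}u_{cc}$ or $u_{aa}u_{bb}u_{cc}u_{bb}$, both giving $\alpha_3$; and $u_{aa}u_{bb}u_{cc}u_{dd}$, giving $\alpha_7$. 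The ``mixed'' integrals $\alpha_2,\alpha_4,\alpha_5,\alpha_6$ never arise from $\operatorname{fix}^4$, and every other partition collapses (via $u_{mm}^2=u_{mm}$ and traciality) to a diagonal monomial of degree $\le 3$ whose Haar value is already known to be $\tfrac1{N(N-1)(N-2)}$, $\tfrac1{N(N-1)}$, or $\tfrac1N$. Counting the tuples realising each type then yields
\begin{equation*}
14 = N(N-1)\,\alpha_1 + 2N(N-1)(N-2)\,\alpha_3 + N(N-1)(N-2)(N-3)\,\alpha_7 + 11,
\end{equation*}
where the constant $11$ collects all of the lower-degree contributions.

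Finally I would substitute the expressions for $\alpha_1,\alpha_3,\alpha_7$ in terms of $\alpha_4$ given in Theorem~\ref{len4}. The coefficient of $\alpha_4$ telescopes to $N(N-1)\big[(N-2)(N-3)+2(N-3)-(N-2)\big]=N(N-1)(N^2-4N+2)=q(N)$, while the parameter-free part adds up to $15$; hence $q(N)\,\alpha_4 = 14-15 = -1$, that is $h(u_{11}u_{22}u_{13}u_{24})=\alpha_4=-1/q(N)$. Feeding this value back into the six displayed formulae of Theorem~\ref{len4} and simplifying the resulting rational expressions gives the remaining six identities. I expect the only genuine obstacle to be the combinatorial step: reliably reducing each of the fifteen patterns and matching it to the correct $\alpha_i$ (or to a lower-degree integral); once the identity above is in hand the algebra is routine.
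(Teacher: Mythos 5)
Your proposal is correct and is essentially the paper's own argument: both pin down the free parameter $\alpha_4$ of Theorem \ref{len4} by evaluating $h(\operatorname{fix}^4)=C_4=14$ as a linear combination of the diagonal integrals, arriving at the identical relation $N(N-1)\alpha_1+2N(N-1)(N-2)\alpha_3+N(N-1)(N-2)(N-3)\alpha_7=3$ and hence $q(N)\alpha_4=-1$. The only cosmetic difference is that you expand $\operatorname{fix}^4$ directly over all $N^4$ tuples and sort by set partitions, while the paper reaches the same counts by first writing $\operatorname{fix}^2=\operatorname{fix}+\sum_{k\neq l}u_{kk}u_{ll}$ and isolating the double sum $\sum_{i\neq j}u_{ii}u_{jj}\cdot\sum_{k\neq l}u_{kk}u_{ll}$, whose Haar value $6$ is extracted using $C_2$ and $C_3$.
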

\begin{proof} The Haar state is faithful on the $*$-algebra generated by these generators, and by Theorem \ref{4fo}, $u_{22}u_{11}u_{23}\neq 0$:
\begin{align*}
  \alpha_1 &=h( u_{11}u_{22}u_{11}u_{22})=h( |u_{22}u_{11}u_{22}|^2), \\
  \alpha_2 & =h( u_{11}u_{22}u_{11}u_{23})=h( |u_{22}u_{11}u_{23}|^2), \\
  \alpha_3 & =h( u_{11}u_{22}u_{11}u_{33})=h( |u_{22}u_{11}u_{33}|^2), \\
  \alpha_4 & =h( u_{11}u_{22}u_{13}u_{24}), \\
  \alpha_5 & =h( u_{11}u_{22}u_{13}u_{32}), \\
  \alpha_6 & =h( u_{11}u_{22}u_{13}u_{34}), \\
  \alpha_7 & =h( u_{11}u_{22}u_{33}u_{44}).
\end{align*}
Liberally using Proposition \ref{Haar}, the following six expansions give a rank six linear system in $\alpha_1,\dots,\alpha_7$:
$$h\left(u_{11}u_{22}u_{11}\sum_k u_{2k}\right),\,h\left(u_{11}u_{22}u_{11}\sum_k u_{3k}\right),\,h\left(u_{11}u_{22}u_{13}\sum_k u_{2k}\right),$$
$$h\left(u_{11}u_{22}u_{13}\sum_k u_{3k}\right),\,h\left(u_{11}u_{22}u_{33}\sum_k u_{4k}\right),\,h\left(u_{11}u_{22}u_{33}\sum_k u_{2k}\right).$$
For example,
\begin{align*}
  0  =h\left(u_{11}u_{22}u_{13}\sum_{k=1}^N u_{2k}\right)&=0+h( u_{11}u_{22}u_{13}u_{22})+0+(N-3)\alpha_4 \\
  \implies \alpha_2+(N-3)\alpha_4  &=0.
\end{align*}

Note that
\begin{align*}
\operatorname{fix}^2&=\operatorname{fix}+\sum_{\underset{k\neq l}{k,l=1}}^N u_{kk}u_{ll}
\\ \implies\operatorname{fix}^3&=\operatorname{fix}^2+\operatorname{fix}\cdot \sum_{k\neq l}u_{kk}u_{ll}.
\end{align*}
Then from $C_3=5$ and $C_2=2$ (see Theorem \ref{PWT})  it follows $h(\operatorname{fix}\cdot \sum_{k\neq l}u_{kk}u_{ll})=3$. Finally
\begin{align*}
  \operatorname{fix}^4 & =\operatorname{fix}^3+\operatorname{fix}^2\cdot \sum_{k\neq l}u_{kk}u_{ll} \\
   & = \operatorname{fix}^3+(\operatorname{fix}+\sum_{i\neq j}u_{ii}u_{jj})\cdot \sum_{k\neq l}u_{kk}u_{ll}\\
   & =\operatorname{fix}^3+\operatorname{fix}\cdot \sum_{k\neq l}u_{kk}u_{ll}+\sum_{i\neq j}u_{ii}u_{jj}\cdot \sum_{k\neq l}u_{kk}u_{ll}.
\end{align*}
In the case of $C(S_N^+)$,  with $C_4=14$, applying the Haar state on both sides yields:
$$h\left(\sum_{i\neq j}u_{ii}u_{jj}\cdot \sum_{k\neq l}u_{kk}u_{ll}\right)=6.$$
In the double sum, there are $N(N-1)$ terms with integral equal to $\alpha_1$, $2N(N-1)(N-2)$ terms with integral equal to $\alpha_3$, $N(N-1)$ terms with integral equal to $h(u_{11}u_{22}u_{11})$, $N(N-1)(N-2)$ with integral equal to $h(u_{11}u_{22}u_{33})$, $N(N-1)(N-2)$ terms with integral equal to $h(u_{11}u_{22}u_{33}u_{11})$, and finally $N(N-1)(N-2)(N-3)$ terms with integral equal to $\alpha_7$. The explicit formulae follow.

\bigskip

The fourth moment of the main character in the exotic case could be 15 or 14, but in the case of the fourth moment being 15, the relation generated gives
$$h(u_{11}u_{22}u_{11}u_{23})=h(|u_{22}u_{11}u_{23}|^2)=0,$$
but this is impossible due to Theorem \ref{efto}. Therefore the fourth moment must be 14, and the explicit formulae hold for exotic quantum permutation groups also.
\end{proof}

How far can these techniques be pushed?  In the case of $S_N^+$, the explicit formulae above together with the moments of the main character mean that it is possible to write down linear relations for the integrals of degree five monomials. Whether this gives a system of full rank is another matter.
\subsection*{Acknowledgement}
Some of this work goes back to discussions with Teo Banica, and free orbitals in $S_4^+$ are due to Teo. Thanks to David Roberson to pointing to Definition 4.1 in his pre-print \emph{Quantum symmetry vs nonlocal symmetry} with Simon Schmidt. The idea for the magic basis which gives the flat matrix model comes from this definition. The question of inner faithfulness was posed by Uwe Franz.


\begin{thebibliography}{99}
\bibitem{ban}T. Banica, Homogeneous quantum groups and their easiness level, \emph{Kyoto J. Math.} 61, 1--30 (2021)

\bibitem{ba1}T. Banica, Introduction to quantum groups, Springer Nature Switzerland, (2023), doi:10.1007/978-3-031-23817-8.
\bibitem{ba2}T. Banica,  Higher orbitals of quizzy quantum group actions, \emph{Adv. Appl. Math.} 109, 1--37 (2019)
\bibitem{bb3}T. Banica and J. Bichon, Quantum groups acting on 4 points, \emph{J. Reine Angew. Math.} 626, 74--114 (2009)
\bibitem{bb4}T. Banica and J. Bichon, Hopf images and inner faithful representations, \emph{J. Reine Angew. Math.} 626, 74--114 (2009)
 \bibitem{bac}T. Banica and B. Collins, Integration over the Pauli quantum group, \emph{Glasg. Math. J.} 52, 677--704 (2010)

\bibitem{bni}T. Banica and I. Nechita, Flat matrix models for quantum permutation groups, \emph{Adv. Appl. Math.} 83.  24--46 (2017)
\bibitem{bi3}J. Bichon, Algebraic quantum permutation groups, {\em Asian-Eur. J. Math.} {\bf 1}, 1--13 (2008)
\bibitem{bos} A. B\"{o}ttcher, I.M. Spitkovsky, A gentle guide to the basics of two projections theory, \emph{Linear Algebra Appl.} 432 (6), 1412–-1459 (2010)
\bibitem{BCF} M. Brannan, A. Chirvasitu and A. Freslon, Topological generation and matrix models for quantum reflection groups, \emph{Adv Math}, \textbf{363}, 1--26 (2020)
\bibitem{hu0}H. Huang, \emph{Compact quantum group actions on compact quantum spaces}, PhD Thesis, University of Buffalo (2013)
\bibitem{lmr}M. Lupini, L. Man\v cinska and D.E. Roberson, Nonlocal games and quantum permutation groups, {\em J. Funct. Anal.} {\bf 279}, 1--39 (2020)


\bibitem{mcc}J.P. McCarthy, A state-space approach to quantum permutations, \emph{Exp. Math.}, Volume 40, Issue 3, 628--664 (2022)



\bibitem{wa2}S. Wang, Quantum symmetry groups of finite spaces, {\em Comm. Math. Phys.} {\bf 195}, 195--211 (1998)
\bibitem{wa1}Simeng Wang, $L_p$-improving convolution operators on finite quantum groups, \emph{Indiana Univ. Math. J.} 65, 1609--1637,
(2016)
\bibitem{wo1}S.L. Woronowicz, Compact matrix pseudogroups, {\em Comm. Math. Phys.} {\bf 111}, 613--665 (1987)


\end{thebibliography}
\end{document}